\newtheorem{theorem}{Theorem}[section]
\theoremstyle{definition}
\newtheorem{definition}[theorem]{Definition}
\newtheorem{remark}[theorem]{Remark}
\numberwithin{equation}{section}
\newcommand{\CC}{\mathbb C}
\newcommand{\HH}{\mathbb H}
\newcommand{\QQ}{\mathbb Q}
\newcommand{\RR}{\mathbb R}
\newcommand{\ZZ}{\mathbb Z}
\newcommand{\SL}{\mathop{\mathrm {SL}}\nolimits}
\newcommand{\Orth}{\mathop{\null\mathrm {O}}\nolimits}
\newcommand{\rank}{\mathop{\mathrm {rk}}\nolimits}
\newcommand{\latt}[1]{{\langle{#1}\rangle}}
\newenvironment{psmallmatrix}
  {\left(\begin{smallmatrix}}
{\end{smallmatrix}\right)}
\begin{document}

\title[There are no extremal eutactic stars other than root systems]{There are no extremal eutactic stars other than root systems}

\author{Haowu Wang}

\address{School of Mathematics and Statistics, Wuhan University, Wuhan 430072, Hubei, China}

\email{haowu.wangmath@gmail.com}

\subjclass[2020]{11F50, 17B22}

\date{\today}

\keywords{extremal eutactic stars, Jacobi forms, theta blocks, root systems}

\begin{abstract}
A eutactic star on an integral lattice is called extremal if it induces a holomorphic Jacobi form of lattice index and singular weight via the theta block. The famous Macdonald identities imply that root systems are extremal as eutactic stars. In this paper we prove that every extremal eutactic star arises as a root system. This answers a question posed by Skoruppa.
\end{abstract}

\maketitle

\section{Introduction and the statement of the main result}
Let $L$ be an integral positive definite lattice with the bilinear form $(-,-)$ and dual lattice $L'$. A finite family $\mathbf{s}$ of nonzero elements $s_j$ in $L'$  ($1\leq j\leq N$) is called a \textit{eutactic star} on $L$ if it satisfies
\begin{equation*}
\sum_{j=1}^N (s_j, x)^2 = (x,x), \quad \text{for all $x\in L$}.      
\end{equation*}
Equivalently, the family $\mathbf{s}$ induces an isometric embedding 
$$
\iota_\mathbf{s} : L \to \ZZ^N, \quad x \mapsto \big((s_j,x) : 1\leq j\leq N \big). 
$$
Vice versa, any isometric embedding from $L$ to $\ZZ^N$ may be realized in this way. 

A eutactic star on $L$ also induces holomorphic Jacobi forms of lattice index $L$ via theta blocks; see the function $\vartheta_\mathbf{s}(\tau, \mathfrak{z})$ defined in \eqref{eq:theta-block} below. Let $v_\eta$ denote the multiplier system 
of the Dedekind eta function
$$
\eta(\tau) = q^{\frac{1}{24}}\prod_{n=1}^\infty(1-q^n), \quad \tau \in \HH, \; q=e^{2\pi i \tau} 
$$
as a modular form of weight $1/2$ on $\SL_2(\ZZ)$. We define the shadow of $L$ as
$$
L^\bullet = \{ x \in L\otimes\QQ : (x,y) - (y,y)/2 \in \ZZ \quad \text{for all $y\in L$} \}.
$$
Note that $L^\bullet = L'$ if $L$ is an even lattice. Following \cite{Gri88, GSZ19} one defines Jacobi forms of lattice index, which are a generalization of classical Jacobi forms introduced by Eichler and Zagier \cite{EZ85}. 
\begin{definition}
Let $k$ be integral or half-integral and $D$ be an integer modulo $24$. A holomorphic function $\varphi(\tau,\mathfrak{z}) : \HH \times (L\otimes \CC) \rightarrow \CC$ is called a \textit{holomorphic Jacobi form} of weight $k$, character $v_\eta^D$ and index $L$, if it satisfies 
    \begin{align*}
    \varphi \left( \frac{a\tau +b}{c\tau + d},\frac{\mathfrak{z}}{c\tau + d} 
    \right)& = v_\eta(A)^D(c\tau + d)^k 
    \exp{\left(\pi i\frac{c(\mathfrak{z},\mathfrak{z})}{c 
            \tau + d}\right)} \varphi ( \tau, \mathfrak{z} ),  \\
    \varphi (\tau, \mathfrak{z}+ x \tau + y)&=(-1)^{(x,x)+(y,y)}\exp\left( -\pi i\big( (x,x)\tau + 2(x,\mathfrak{z}) \big) \right) 
    \varphi (\tau, \mathfrak{z} ),       
    \end{align*}
    for all $A=\begin{psmallmatrix} a & b \\ c & d \end{psmallmatrix} \in \SL_2(\ZZ)$ 
    and $x,y \in L$, and if its Fourier expansion takes the form
    \begin{equation}\label{eq:Fourier}
    \varphi ( \tau, \mathfrak{z} )= \sum_{\substack{n\in \frac{D}{24}+\ZZ, \; \ell \in L^\bullet \\ 2n\geq (\ell,\ell) }}f(n,
    \ell)q^n\zeta^\ell, \quad \zeta^\ell=e^{2\pi i (\ell,\mathfrak{z})}.
    \end{equation}  
\end{definition}
From the theta decomposition of Jacobi forms we conclude that $k\geq \frac{1}{2}\rank(L)$ if $\varphi$ is non-constant, where $\rank(L)$ denotes the rank of $L$. The smallest possible weight of a non-constant holomorphic Jacobi form of index $L$, i.e. $\frac{1}{2}\rank(L)$ is called the \textit{singular} weight. 

The Jacobi triple product formula
$$
\vartheta(\tau,z) = q^{\frac{1}{8}}(\zeta^{\frac{1}{2}}-\zeta^{-\frac{1}{2}}) \prod_{n=1}^\infty (1-q^n\zeta)(1-q^n\zeta^{-1})(1-q^n), \quad z\in \CC, \; \zeta=e^{2\pi i z}
$$
defines a holomorphic Jacobi form of singular weight $\frac{1}{2}$, character $v_\eta^3$ and index $\ZZ$ (see e.g. \cite{GN98, GSZ19}). Let $\mathbf{s}=(s_j: 1\leq j \leq N)$ be a eutactic star on $L$ and $l$ be a positive integer. Gritsenko, Skoruppa and Zagier \cite{GSZ19} defined a holomorphic Jacobi form of weight $N/2$ and index $L$ as 
\begin{equation}\label{eq:theta-block}
\vartheta_\mathbf{s}(\tau, \mathfrak{z}) := \prod_{j=1}^N \vartheta(\tau, (s_j, \mathfrak{z})),
\end{equation}
and they further considered the function $\eta(\tau)^{l-N} \vartheta_\mathbf{s}(\tau, \mathfrak{z})$. Such functions are called \textit{theta blocks in several variables} following \cite{GSZ19}. 
In general, $\eta^{l-N} \vartheta_\mathbf{s}$ is no longer a holomorphic Jacobi form, because it may not be holomorphic at infinity, i.e. the condition $2n\geq (\ell,\ell)$ in Fourier expansion \eqref{eq:Fourier} may not hold. The smallest possible $l$ such that $\eta^{l-N} \vartheta_\mathbf{s}$ defines a holomorphic Jacobi form is $\rank(L)$. 

A eutactic star $\mathbf{s}$ on $L$ is called \textit{extremal} if the associated function 
$$
\vartheta_\mathbf{s}^*(\tau, \mathfrak{z}) :=\eta(\tau)^{\rank(L)-N} \vartheta_\mathbf{s}(\tau, \mathfrak{z})
$$
is a holomorphic Jacobi form of singular weight and index $L$. A generalization of \cite[Proposition 5.2]{GSZ19} yields that $\mathbf{s}$ is extremal if and only if the inequality
$$
\min_{x\in L\otimes\RR} \sum_{j=1}^N B\big( (s_j,x) \big) \geq \frac{N-\rank(L)}{24}
$$
holds, where 
$$
B(x)=\frac{1}{2}\left( y - \frac{1}{2} \right)^2, \quad y-x\in \ZZ, \; 0\leq y < 1.
$$

It is a particularly interesting and highly non-trivial question to look for extremal eutactic stars. All known examples are related to root systems. Let $R$ be an irreducible root system with the normalized bilinear form $\latt{-,-}$ such that $\latt{r,r}=2$ for long roots $r$. We denote the dual Coxeter number and a set of positive roots of $R$ by $h$ and $R^+$, respectively. Then we have
$$
\sum_{r\in R^+} \latt{r, z}^2 = h\latt{z,z}, \quad z\in R\otimes\CC.
$$
Let $P$ denote the integral lattice 
$$
\{ x \in R\otimes\QQ : \latt{x,r} \in \ZZ, \quad \text{for all $r\in R$} \} 
$$
equipped with the bilinear form
$$
(x,x):=h\latt{x,x}, \quad x\in P. 
$$
We then have the isometric embedding
$$
P \to \ZZ^{|R^+|}, \quad x \mapsto \big((r/h,x)=\latt{r,x} : r\in R^+\big). 
$$
Thus the family $\mathbf{s}_R:=(r/h: r\in R^+)$ defines a eutactic star on $P$. As observed by Gritsenko, Skoruppa and Zagier \cite{GSZ19}, the Macdonald identity \cite{Mac72} implies that $\mathbf{s}_R$ is extremal, and the associated function $\vartheta_{\mathbf{s}_R}^*$ coincides with the product side of the denominator identity of the affine Lie algebra of type $R$ (see \cite{KP84}). 

At both conferences in Darmstadt in 2019 and Sochi in 2020, Skoruppa asked whether there are extremal eutactic stars other than root systems. In this paper, we give a negative answer to Skoruppa's question.

\begin{theorem}\label{MTH}
Let $\mathbf{s}$ be an extremal eutactic star on $L$. Then the set $\{ x \in L': x \in \mathbf{s} \; \text{or} \; -x \in \mathbf{s}\}$ is isomorphic to a root system of the same rank as $L$. 
\end{theorem}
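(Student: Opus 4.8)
The plan is to exploit \emph{both} features packaged into extremality—holomorphy at infinity and the singular weight—but to let the singular weight do the heavy lifting. Since $\vartheta_\mathbf{s}^*$ is by definition a holomorphic Jacobi form of weight $\tfrac12\rank(L)$, its theta decomposition (the same one invoked before the definition) writes it as $\sum_{\mu\in L'/L} h_\mu(\tau)\Theta_{L,\mu}(\tau,\mathfrak{z})$ with each $h_\mu$ of weight $0$; a holomorphic weight-$0$ modular form is constant, so $\vartheta_\mathbf{s}^*$ is a finite combination of Jacobi theta series of $L$. In particular every Fourier coefficient $f(n,\ell)$ in \eqref{eq:Fourier} is supported on $2n=(\ell,\ell)$, i.e.
$$
\vartheta_\mathbf{s}^*(\tau,\mathfrak{z})=\sum_{\ell} c(\ell)\,q^{(\ell,\ell)/2}\zeta^\ell .
$$

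Next I would isolate the lowest power of $q$. As $\vartheta(\tau,z)=q^{1/8}(\zeta^{1/2}-\zeta^{-1/2})(1+O(q))$ and $\eta^{\rank(L)-N}=q^{(\rank(L)-N)/24}(1+O(q))$, the smallest exponent occurring is $(\rank(L)+2N)/24$, with coefficient exactly
$$
D(\mathfrak{z}):=\prod_{j=1}^N\big(\zeta^{s_j/2}-\zeta^{-s_j/2}\big),\qquad \zeta^{s_j/2}=e^{\pi i(s_j,\mathfrak{z})}.
$$
The singular support forces every monomial $\zeta^\ell$ of $D$ onto the sphere $(\ell,\ell)=c_0:=(\rank(L)+2N)/12$: a monomial off this sphere would have to sit at the $q$-power $(\ell,\ell)/2\neq(\rank(L)+2N)/24$, impossible at the bottom level. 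Thus $D$ is supported on a single sphere in $L'\otimes\RR$, equivalently $\Delta D=-(2\pi)^2 c_0\,D$ for the Laplacian attached to $(-,-)$. Computing $\Delta D/D$ for the product and using the eutactic identity $\sum_j s_j\otimes s_j=\mathrm{id}$, whose trace gives $\sum_j(s_j,s_j)=\rank(L)$, this spherical condition becomes the identity
$$
\sum_{j\neq k}(s_j,s_k)\,\cot\!\big(\pi(s_j,\mathfrak{z})\big)\cot\!\big(\pi(s_k,\mathfrak{z})\big)\;=\;\tfrac{2}{3}\big(\rank(L)-N\big),
$$
holding identically in $\mathfrak{z}$.

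From this identity I would read off the root system. Its left-hand side has, a priori, poles along each hyperplane $(s_i,\mathfrak{z})\in\ZZ$, where $\cot(\pi(s_i,\mathfrak{z}))$ blows up; constancy forces the poles to cancel, and taking the residue along $s_i^\perp$ yields, for each $i$, the relation $\sum_{k\neq i}(s_i,s_k)\cot(\pi(s_k,\mathfrak{z}))\equiv 0$ on $s_i^\perp$. Since the reflection $\sigma_i$ in $s_i^\perp$ fixes that hyperplane pointwise, so preserves $\cot(\pi(s_k,\mathfrak{z}))|_{s_i^\perp}$ while sending $(s_i,s_k)\mapsto-(s_i,s_k)$, the natural way to realize this cancellation is that the family $\{\pm s_j\}$ be stable under every $\sigma_i$. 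Granting reflection closure, the group generated by the $\sigma_i$ permutes the finite set $\{\pm s_j\}$ and acts faithfully (they span $L\otimes\RR$ by eutacticity), hence is finite; the crystallographic condition $2(s_i,s_j)/(s_i,s_i)\in\ZZ$ then follows from finiteness by the usual root-string argument, and the extremality bound $\min_x\sum_j B((s_j,x))\ge\frac{N-\rank(L)}{24}$ rules out proportional repetitions among the $s_j$. This exhibits $\{\pm s_j\}$ as a finite, reflection-closed, crystallographic spanning set, i.e. a root system of rank $\rank(L)$.

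The hard part will be the third step: upgrading the residue relations $\sum_{k\neq i}(s_i,s_k)\cot(\pi(s_k,\mathfrak{z}))\equiv 0$ on $s_i^\perp$ to the exact statement that $\sigma_i$ permutes $\{\pm s_j\}$. The subtlety is that several $s_k$ may restrict to the same linear form on $s_i^\perp$—they differ by integer multiples of $s_i$—so the cotangents appearing are not linearly independent, and one must first group the terms by their restriction to $s_i^\perp$ and analyze the order of each pole before concluding pairwise cancellation under $\sigma_i$. I expect this hyperplane-by-hyperplane pole analysis to be the technical heart of the argument; once it is in place, the Macdonald identity reappears at the end as the assertion that, for the root system so produced, $D$ is exactly the affine Weyl denominator and the extremality inequality holds with equality.
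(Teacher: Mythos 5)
Your reduction is sound as far as it goes, and it is a genuinely different route from the paper's: the singular-weight support condition, applied to the lowest $q$-coefficient $D(\mathfrak{z})=\prod_j\bigl(\zeta^{s_j/2}-\zeta^{-s_j/2}\bigr)$, does force $D$ onto a single sphere, and with $\sum_j(s_j,s_j)=\rank(L)$ your cotangent identity follows correctly. (The paper never isolates a Fourier coefficient; it encodes the same support condition in a heat operator and proves, in its Theorem \ref{th}, that a singular-weight form vanishing on $v^\perp$ vanishes to order exactly one and satisfies $\varphi(\tau,\sigma_v(\mathfrak{z}))=-\varphi(\tau,\mathfrak{z})$.) But your proof has two genuine gaps, and they sit exactly at the two points where the paper does its real work. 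The first you concede yourself: the passage from the residue relations $\sum_{k\neq i}(s_i,s_k)\cot(\pi(s_k,\mathfrak{z}))\equiv 0$ on $s_i^\perp$ to ``$\sigma_i$ permutes $\{\pm s_j\}$'' is not supplied, only hoped for, and the difficulty is not a routine technicality. Writing $s_k=\bar s_k+c_k s_i$ with $\bar s_k\in s_i^\perp$, the first-order pole data only forces the sum of the $c_k$ over each fiber $\{k:\bar s_k=\bar v\}$ (suitably signed) to vanish; a multiset of $c_k$'s such as $\{2,-1,-1\}$ satisfies the sum condition without being symmetric under negation, which is what $\sigma_i$-closure actually requires. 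So the residue relation by itself cannot yield reflection closure; the paper gets that closure for free from anti-invariance, with no identity to analyze.

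The second gap is an outright error: the crystallographic condition does not ``follow from finiteness by the usual root-string argument.'' A finite, spanning, reflection-closed set of vectors need not be crystallographic: $I_2(5)$, $H_3$, $H_4$ are classical examples, and one can even keep Weyl-group angles --- the set $\{\pm ae_1,\pm ae_2\}\cup\bigl\{\pm b(e_1+e_2)/\sqrt2,\pm b(e_1-e_2)/\sqrt2\bigr\}$ in the plane is reflection-closed for \emph{every} ratio $b/a$, yet $2(x,y)/(x,x)\in\ZZ$ fails unless $b/a\in\{\sqrt2,1/\sqrt2\}$. The root-string argument presupposes integrality (it lives inside $\mathfrak{sl}_2$-representation theory of a Lie algebra); it cannot produce it. Nor does $s_j\in L'$ rescue the step: $\sigma_x(y)-y\in L'$ being a rational multiple of $x$ only gives $2(x,y)/(x,x)\in\frac1m\ZZ$ when $x/m\in L'$. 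This integrality is precisely what the second half of the paper's proof is for: it embeds $L$ into $M=U\oplus L$, forms $\widehat{\vartheta}_\mathbf{s}^*(Z)=\vartheta_\mathbf{s}^*(\tau,\mathfrak{z})e^{2\pi i w}$, and uses the Laplace operator on $M$ to obtain anti-invariance under $\sigma_{\lambda_x}$, $\lambda_x=(0,x,1)$; the resulting vanishing along the divisor of $\sigma_{\lambda_x}(\lambda_y)=\bigl(0,\sigma_x(y),1-2(x,y)/(x,x)\bigr)$, compared with the known shape of the zero locus $\bigcup_j\{(s_j,\mathfrak{z})\in\ZZ\tau+\ZZ\}$, forces $2(x,y)/(x,x)\in\ZZ$. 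You would need a substitute for this mechanism (perhaps exploiting periodicity of your identity in $\mathfrak{z}$), but as written the step is missing; your parenthetical claim that the extremality bound rules out proportional repetitions is likewise unargued, though that one is plausibly recoverable from a double-pole analysis of your identity.
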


The proof of the theorem is inspired by our previous joint work \cite{WW23} with Brandon Williams, in which we use the Laplace operator on a tube domain to show the non-existence of holomorphic Borcherds products of singular weight on $\Orth(l,2)$ with $l> 26$. In the next section, we employ the heat operator on Jacobi forms to prove Theorem \ref{MTH}. 

\section{A proof of Theorem \ref{MTH}}
Let $\{\alpha_1,...,\alpha_l\}$ be a basis of $L\otimes\RR$ and $\{\alpha_1^*,...,\alpha_l^*\}$ be the dual basis. We write 
$$
\mathfrak{z}=\sum_{i=1}^l z_i\alpha_i \in L\otimes \CC \quad \text{and} \quad \frac{\partial}{\partial\mathfrak{z}}=\sum_{i=1}^l\alpha_i^* \frac{\partial}{\partial z_i}, \quad z_i\in \CC.
$$
The heat operator is defined as
$$
H=\frac{1}{2\pi i}\frac{\partial}{\partial\tau}+\frac{1}{8\pi^2}\left(\frac{\partial}{\partial\mathfrak{z}},\frac{\partial}{\partial\mathfrak{z}}\right).
$$
It is clear that $H$ is independent of the choice of basis. 
This type of operator was first used in \cite{EZ85} to construct differential operators on classical Jacobi forms, and later generalized to Jacobi forms of lattice index in \cite{CK00}.  

Let $\varphi$ be a holomorphic Jacobi form of singular weight and index $L$. By the theta decomposition, $\varphi$ is a $\CC$-linear combination of Jacobi theta functions of $L$ (see e.g. \cite[Section 4]{Gri94} or \cite[Section 12]{GSZ19}). The operator $H$ acts on the Fourier expansion of $\varphi$ via
$$
H(q^n \zeta^\ell) = \left( n-\frac{1}{2}(\ell,\ell) \right) q^n\zeta^\ell. 
$$
Therefore, $H(\varphi)$ is identically zero. Conversely, if a non-constant holomorphic Jacobi form $\phi$ satisfies $H(\phi)=0$, then it is of singular weight (see \cite[Lemma 4.1]{Gri94}).  

We first describe zeros of holomorphic Jacobi forms of singular weight. 

\begin{theorem}\label{th}
Let $\varphi$ be a non-constant holomorphic Jacobi form of singular weight and index $L$. Let $v$ be a nonzero vector of $L'$. Assume that $\varphi$ vanishes on the set
$$
v^\perp:= \{ (\tau,\mathfrak{z})\in \HH \times(L\otimes\CC) : (v,\mathfrak{z})=0 \}.
$$
Then $v^\perp$ has multiplicity one in the divisor of $\varphi$ and the identity 
$$
\varphi(\tau,\sigma_v(\mathfrak{z})) = - \varphi(\tau, \mathfrak{z})
$$
holds for any $(\tau,\mathfrak{z})\in \HH \times(L\otimes\CC)$, where $\sigma_v$ is the reflection fixing $v^\perp$ defined as
$$
\sigma_v(x) = x - \frac{2(v,x)}{(v,v)}v, \quad x\in L.
$$
\end{theorem}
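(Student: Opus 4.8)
The plan is to exploit the single relation $H(\varphi)=0$ established above, reading it as a heat-type recursion on the Taylor coefficients of $\varphi$ transverse to the hyperplane $v^\perp$. First I would choose the basis $\{\alpha_1,\dots,\alpha_l\}$ of $L\otimes\RR$ adapted to $v$: take $\alpha_1=v/(v,v)$ and let $\alpha_2,\dots,\alpha_l$ be a basis of $\{x\in L\otimes\RR : (v,x)=0\}$. Writing $\mathfrak{z}=\sum_i z_i\alpha_i$ we then have $(v,\mathfrak{z})=z_1$, so the divisor component $v^\perp$ is cut out by $z_1=0$, and $\sigma_v$ acts simply by $(z_1,z_2,\dots,z_l)\mapsto(-z_1,z_2,\dots,z_l)$. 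The key point of this choice is that the dual basis satisfies $\alpha_1^*=v$, which is orthogonal to $\alpha_2^*,\dots,\alpha_l^*$; hence the index Laplacian decouples as $\left(\frac{\partial}{\partial\mathfrak{z}},\frac{\partial}{\partial\mathfrak{z}}\right)=(v,v)\frac{\partial^2}{\partial z_1^2}+\Delta'$, where $\Delta'$ is a second-order operator in $z_2,\dots,z_l$ alone, with no mixed terms $\partial_{z_1}\partial_{z_i}$.

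Next I would expand $\varphi=\sum_{k\geq m}\varphi_k(\tau,z_2,\dots,z_l)\,z_1^k$, where $m\geq 1$ is the order of vanishing along $v^\perp$ and $\varphi_m\not\equiv 0$; since $\varphi$ is entire in $z_1$ for the remaining variables fixed, this expansion is globally valid. Substituting into $H(\varphi)=0$ and reading off the coefficient of $z_1^k$ gives, for every $k$, the two-step recursion $\frac{1}{2\pi i}\partial_\tau\varphi_k+\frac{1}{8\pi^2}\big((v,v)(k+2)(k+1)\varphi_{k+2}+\Delta'\varphi_k\big)=0$, which expresses $\varphi_{k+2}$ as an explicit differential operator applied to $\varphi_k$. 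Two conclusions fall out at once. For the multiplicity, if $m\geq 2$ then the lowest power present in $H(\varphi)$ is $z_1^{m-2}$, with coefficient $(v,v)\,m(m-1)\varphi_m=0$; positive definiteness forces $(v,v)>0$, hence $\varphi_m\equiv 0$, a contradiction, so $m=1$ and $v^\perp$ has multiplicity one. For the symmetry, the recursion separates the coefficients into an even chain generated by $\varphi_0$ and an odd chain generated by $\varphi_1$; since $\varphi_0\equiv 0$ by hypothesis, induction along the even chain yields $\varphi_{2k}\equiv 0$ for all $k$. Therefore $\varphi=\sum_{k\text{ odd}}\varphi_k z_1^k$ is odd in $z_1$, which is exactly $\varphi(\tau,\sigma_v(\mathfrak{z}))=-\varphi(\tau,\mathfrak{z})$.

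I expect the main obstacle to be organizational rather than conceptual: one must verify that the adapted dual basis genuinely kills the mixed second-order terms, so that the heat operator produces a recursion advancing by two rather than coupling all Taylor coefficients together. Once this decoupling is secured, the two assertions of the theorem are simultaneously the lowest-order and the even-parity shadows of one and the same recursion, and the remaining steps — convergence of the $z_1$-expansion, global propagation of the vanishing, and the bookkeeping of the constants $2\pi i$ and $8\pi^2$ — are routine.
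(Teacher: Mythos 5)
Your proposal is correct and takes essentially the same approach as the paper: both rest on the heat operator annihilating singular-weight Jacobi forms, coordinates adapted to $v$ so that the index Laplacian decouples into $(v,v)\partial_{z_1}^2+\Delta'$ with no mixed terms, and a Taylor expansion transverse to $v^\perp$, with your multiplicity-one argument being literally the paper's. The only difference is organizational: where the paper symmetrizes, setting $\phi=\varphi(\tau,\sigma_v(\mathfrak{z}))+\varphi(\tau,\mathfrak{z})=O(z^2)$ and asserting that $H(\phi)=0$ forces $\phi=0$, you instead run the two-step recursion to annihilate the even Taylor coefficients directly---which is precisely the justification the paper leaves implicit in the word ``forces.''
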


\begin{proof}
Let $L_v$ denote the orthogonal complement of $v$ in $L$. We write $\mathfrak{z}=zv+z'$ for $z\in \CC$ and $z'\in L_v\otimes\CC$. Let $d$ be the multiplicity of $v^\perp$ in the divisor of $\varphi$, that is, the Taylor expansion of $\varphi$ at $z=0$ takes the form
$$
\varphi(\tau,\mathfrak{z}) = f_d(\tau,z')z^d + O(z^{d+1}), \quad f_d(\tau,z')\not\equiv 0.
$$
By assumption, $d\geq 1$. 
For the basis of $L\otimes\RR$, we fix $\alpha_1=v$ and $\alpha_2$, ..., $\alpha_l$ to be a basis of $L_v$. By applying the corresponding heat operator to $\varphi$, we derive
$$
H(\varphi) = \varepsilon d(d-1) f_d(\tau,z')z^{d-2} + O(z^{d-1}),
$$
where $\varepsilon$ is a nonzero constant. Since $\varphi$ is of singular weight, $H(\varphi)=0$, as mentioned at the beginning of this section. As the leading term, $\varepsilon d(d-1) f_d(\tau,z')z^{d-2}$ has to be zero, which yields that $d=1$. 
To prove the last claim, we introduce the function
$$
\phi(\tau,\mathfrak{z}):=\varphi(\tau,\sigma_v(\mathfrak{z})) + \varphi(\tau,\mathfrak{z}).
$$
The Taylor expansion of $\phi$ at $z=0$ starts with
$$
\phi(\tau,\mathfrak{z})= \varphi(\tau, -zv + z') + \varphi(\tau,zv+z') = O(z^2).
$$
Obviously, $\phi$ also vanishes on $v^\perp$ and $H(\phi)=0$. If $\phi$ is not identically zero, then by an argument similar to the above, we prove that $\phi$ vanishes on $v^\perp$ with multiplicity one, which contradicts the Taylor expansion of $\phi$ above. Therefore, $\phi=0$. The proof is complete. 
\end{proof}

\begin{remark}
From the proof above we can see that Theorem \ref{th} holds even for any non-constant holomorphic function $\varphi$ on $\HH \times (L\otimes\CC)$ that satisfies $H(\varphi)=0$. 
\end{remark}

We now prove Theorem \ref{MTH}.

\begin{proof}[Proof of Theorem \ref{MTH}]
By assumption, the function 
$$
\vartheta_\mathbf{s}^*(\tau,\mathfrak{z}) = \eta(\tau)^{\rank(L)} \prod_{j=1}^N \frac{\vartheta(\tau,(s_j,\mathfrak{z}))}{\eta(\tau)}
$$
is a holomorphic Jacobi form of singular weight and index $L$. It is well known that $\vartheta(\tau,z)$ vanishes precisely with multiplicity one on the set $\{ (\tau,z)\in \HH\times\CC : z\in \ZZ\tau +\ZZ \}$. Therefore, $\vartheta_\mathbf{s}^*(\tau,\mathfrak{z})=0$ if and only if there exists $1\leq j \leq N$ such that $(s_j,\mathfrak{z})\in \ZZ\tau + \ZZ$. We need to show that the family
$$
\mathcal{S}:=\big( x \in L': x \in \mathbf{s} \; \text{or} \; -x \in \mathbf{s}\big)
$$
defines a root system. 

Let $x, y \in \mathcal{S}$. We claim that there is no integer $m>1$ such that $mx \in \mathcal{S}$, otherwise the multiplicity of $x^\perp$ in the divisor of $\vartheta_\mathbf{s}^*$ would be not simple, a contradiction by Theorem \ref{th}. A similar argument shows that the elements of the family $\mathcal{S}$ are mutually distinct. By Theorem \ref{th}, we have
$$
\vartheta_\mathbf{s}^*(\tau,\sigma_x(\mathfrak{z})) = -\vartheta_\mathbf{s}^*(\tau,\mathfrak{z}).
$$
Therefore, $\vartheta_\mathbf{s}^*(\tau,\mathfrak{z})=0$ if 
$$
(\mathfrak{z},\sigma_x(y)) = (\sigma_x(\mathfrak{z}), y) \in \ZZ\tau + \ZZ.
$$
The shape of the divisor of $\vartheta_\mathbf{s}^*$ implies that $\sigma_x(y) \in \mathcal{S}$. 

It remains to show that $2(x,y)/(x,x)\in \ZZ$. To do it, we have to study the divisor of type $(x,\mathfrak{z})=\tau$. We use the Laplace operator to prove it as the proof of \cite[Theorem 2.1]{WW23}. 

Let $U$ be the unique even unimodular lattice of signature $(1,1)$ and $M=U\oplus L$. Let $\{\beta_1,...,\beta_{l+2}\}$ be a basis of $M\otimes\RR$ and $\{\beta_1^*,...,\beta_{l+2}^* \}$ be the dual basis. We define the Laplace operator as 
$$
\mathbf{\Delta} = \left(\frac{\partial}{\partial Z},\frac{\partial}{\partial Z}\right), \quad Z=\sum_{j=1}^{l+2} z_j\beta_j \in M\otimes \CC, \quad \frac{\partial}{\partial Z}=\sum_{j=1}^{l+2}\beta_j^* \frac{\partial}{\partial z_j}, \quad z_j\in \CC.
$$
It is clear that $\mathbf{\Delta}$ is independent of the choice of basis. For any $\lambda \in M\otimes\QQ$, we have 
$$
\mathbf{\Delta} e^{2\pi i (\lambda, Z)} = -4\pi^2 (\lambda,\lambda) e^{2\pi i (\lambda, Z)}.
$$ 

Write a vector $\lambda\in M'=U\oplus L'$ as $(n,v,m)$ for $n, m\in \ZZ$ and $v\in L'$ with $(\lambda,\lambda)=(v,v)-2nm$. We introduce an auxiliary variable $w\in \HH$ and define a holomorphic function as
$$
\widehat{\vartheta}_\mathbf{s}^*(Z):=\vartheta_\mathbf{s}^*(\tau,\mathfrak{z})e^{2\pi iw}, \quad Z=(\tau,\mathfrak{z},w)\in \HH\times(L\otimes\CC)\times\HH \subsetneq M\otimes\CC. 
$$
The Fourier series of $\widehat{\vartheta}_\mathbf{s}^*(Z)$ are supported only on norm-zero vectors of $M\otimes\QQ$. 
Clearly, 
$$
\mathbf{\Delta} = -2 \frac{\partial}{\partial \tau} \frac{\partial}{\partial w} + \left(\frac{\partial}{\partial\mathfrak{z}},\frac{\partial}{\partial\mathfrak{z}}\right) \quad \text{and} \quad \mathbf{\Delta}\left(\widehat{\vartheta}_\mathbf{s}^*\right)=0.
$$
For any $x\in \mathcal{S}$ we define $\lambda_x:=(0,x,1)\in M'$. Let $K$ denote the orthogonal complement of $\lambda_x$ in $M$.
We write $Z=z\lambda_x+Z'$ for $z\in \CC$ and $Z' \in K\otimes\CC$, and expand $\widehat{\vartheta}_\mathbf{s}^*(Z)$ into Taylor series at $z=0$ as
$$
\widehat{\vartheta}_\mathbf{s}^*(Z) = F_d(Z')z^d+O(z^{d+1}), \quad F_d(Z')\not\equiv 0.
$$
Here $d\geq 1$, because $\vartheta_\mathbf{s}^*(\tau,\mathfrak{z})=0$ whenever $(x,\mathfrak{z})\in \ZZ\tau + \ZZ$. From $\mathbf{\Delta}\big(\widehat{\vartheta}_\mathbf{s}^*\big)=0$ we further deduce that $d=1$, that is, $\widehat{\vartheta}_\mathbf{s}^*(Z)$ vanishes with multiplicity one on the quadratic divisor
$$
\lambda_x^\perp=\{ Z\in \HH\times(L\otimes\CC)\times\HH : (\lambda_x, Z)=0, \; \text{i.e.} \; (x,\mathfrak{z})=\tau \}.
$$
Therefore, we have the Taylor expansion
$$
\widehat{\vartheta}_\mathbf{s}^*(Z) = F_1(Z') z + O(z^2).
$$
Recall that the reflection fixing $\lambda_x^\perp$ is defined as
$$
\sigma_{\lambda_x}(\mu) = \mu - \frac{2(\lambda_x, \mu)}{(\lambda_x,\lambda_x)}\lambda_x, \quad \mu \in M.
$$
We apply the Laplace operator $\mathbf{\Delta}$ to the function 
$$
\hat{\phi}(Z):=\widehat{\vartheta}_\mathbf{s}^*(\sigma_{\lambda_x}(Z))+ \widehat{\vartheta}_\mathbf{s}^*(Z) = \widehat{\vartheta}_\mathbf{s}^*(-z\lambda_x+Z')+\widehat{\vartheta}_\mathbf{s}^*(z\lambda_x+Z') = O(z^2)
$$
and find that $\mathbf{\Delta}(\hat{\phi})=0$, which forces that $\widehat{\vartheta}_\mathbf{s}^*(\sigma_{\lambda_x}(Z))=-\widehat{\vartheta}_\mathbf{s}^*(Z)$ as in the previous proof of Theorem \ref{th}. Let $y\in \mathcal{S}$ and $\lambda_y=(0,y,1)\in M'$. Note that $\widehat{\vartheta}_\mathbf{s}^*(Z)$ also vanishes on $\lambda_y^\perp$.  Therefore, $\widehat{\vartheta}_\mathbf{s}^*(Z)$ vanishes on the quadratic divisor orthogonal to the vector
 $$
\sigma_{\lambda_x}(\lambda_y) = \lambda_y - \frac{2(\lambda_x,\lambda_y)}{(\lambda_x,\lambda_x)}\lambda_x = \left( 0, \sigma_x(y), 1-\frac{2(x,y)}{(x,x)}\right).
 $$
It follows that $\vartheta_\mathbf{s}^*(\tau,\mathfrak{z})=0$ if 
$$
\left(\sigma_x(y),\mathfrak{z}\right) = \left(1 - \frac{2(x,y)}{(x,x)}\right)\tau.
$$
From the shape of the zeros of $\vartheta_\mathbf{s}^*$ described above, we conclude that $2(x,y)/(x,x)$ is integral. 
\end{proof}

\bigskip

\noindent
\textbf{Acknowledgements} 
The author thanks Nils Skoruppa for valuable discussions and for helpful
comments on an earlier version of this paper. The author also thanks the two referees for their useful suggestions.

\bibliographystyle{plainnat}
\bibliofont
\bibliography{refs}

\begin{thebibliography}{9}
\providecommand{\natexlab}[1]{#1}
\providecommand{\url}[1]{\texttt{#1}}
\expandafter\ifx\csname urlstyle\endcsname\relax
  \providecommand{\doi}[1]{doi: #1}\else
  \providecommand{\doi}{doi: \begingroup \urlstyle{rm}\Url}\fi

\bibitem[Choie and Kim(2000)]{CK00}
YoungJu Choie and Haesuk Kim.
\newblock Differential operators on {J}acobi forms of several variables.
\newblock \emph{J. Number Theory}, 82\penalty0 (1):\penalty0 140--163, 2000.

\bibitem[Eichler and Zagier(1985)]{EZ85}
Martin Eichler and Don Zagier.
\newblock \emph{The theory of {J}acobi forms}, volume~55 of \emph{Progress in
  Mathematics}.
\newblock Birkh\"{a}user Boston, Inc., Boston, MA, 1985.

\bibitem[Gritsenko(1988)]{Gri88}
V.~A. Gritsenko.
\newblock Fourier-{J}acobi functions in {$n$} variables.
\newblock \emph{Zap. Nauchn. Sem. Leningrad. Otdel. Mat. Inst. Steklov.
  (LOMI)}, 168\penalty0 (Anal. Teor. Chisel i Teor. Funktsi\u{\i}. 9):\penalty0
  32--44, 187--188, 1988.

\bibitem[Gritsenko and Nikulin(1998)]{GN98}
Valeri~A. Gritsenko and Viacheslav~V. Nikulin.
\newblock Automorphic forms and {L}orentzian {K}ac-{M}oody algebras. {II}.
\newblock \emph{Internat. J. Math.}, 9\penalty0 (2):\penalty0 201--275, 1998.

\bibitem[Gritsenko(1994)]{Gri94}
Valery Gritsenko.
\newblock Modular forms and moduli spaces of abelian and {$K3$} surfaces.
\newblock \emph{Algebra i Analiz}, 6\penalty0 (6):\penalty0 65--102, 1994.

\bibitem[Gritsenko et~al.(2024)Gritsenko, Skoruppa, and Zagier]{GSZ19}
Valery Gritsenko, Nils-Peter Skoruppa, and Don Zagier.
\newblock Theta blocks.
\newblock J. Eur. Math. Soc., publish online, 2024.
\newblock URL \url{https://doi.org/10.4171/jems/1471}.

\bibitem[Kac and Peterson(1984)]{KP84}
Victor~G. Kac and Dale~H. Peterson.
\newblock Infinite-dimensional {L}ie algebras, theta functions and modular
  forms.
\newblock \emph{Adv. in Math.}, 53\penalty0 (2):\penalty0 125--264, 1984.

\bibitem[Macdonald(1972)]{Mac72}
I.~G. Macdonald.
\newblock Affine root systems and {D}edekind's {$\eta $}-function.
\newblock \emph{Invent. Math.}, 15:\penalty0 91--143, 1972.

\bibitem[Wang and Brandon(2023)]{WW23}
Haowu Wang and Williams Brandon.
\newblock On the non-existence of singular {B}orcherds products.
\newblock preprint, 2023.
\newblock URL \url{arXiv:2301.13367}.

\end{thebibliography}

\end{document}